\newcommand{\midb}{\;\middle|\;}
\newcommand{\one}{\mathbbm 1}
\def\reals{\mathbb{R}}
\def\ereals{\overline{\mathbb{R}}}
\def\comp{\raise 1pt \hbox{$\scriptstyle\circ$}}
\def\minimize{\mathop{\rm minimize}\limits}
\def\essinf{\mathop{\rm ess\ inf}\nolimits}
\def\dom{\mathop{\rm dom}\nolimits}
\def\upto{{\raise 1pt \hbox{$\scriptstyle \,\nearrow\,$}}}
\def\downto{{\raise 1pt \hbox{$\scriptstyle \,\searrow\,$}}}
\def\co{\mathop{\rm co}}
\def\FF{(\F_t)_{t=0}^T}
\def\one{\mathbbm 1}
\def\ovr{\mathop{\rm over}}
\def\B{{\cal B}}
\def\C{{\cal C}}
\def\F{{\cal F}}
\def\L{{\cal L}}
\def\N{{\cal N}}
\def\R{{\mathbb R}}
\def\Q{{\cal Q}}
\def\U{{\cal U}}
\def\Y{{\cal Y}}
\newtheorem{theorem}{Theorem}
\newtheorem{lemma}[theorem]{Lemma}
\newtheorem{corollary}[theorem]{Corollary}
\newtheorem{example}{Example}
\newtheorem{remark}{Remark}
\theoremstyle{definition}
\begin{document}

\title{Stochastic programs without duality gaps for objectives without a lower bound}

\author{Ari-Pekka Perkki\"o\footnote{Department of Mathematics, Technische Universit\"at Berlin, \href{mailto:perkkioe@math.tu-berlin.de}{perkkioe@math.tu-berlin.de}}}

\maketitle

\begin{abstract}
This paper studies parameterized stochastic optimization problems in finite discrete time that arise in many applications in operations research and mathematical finance. We prove the existence of solutions and the absence of a duality gap under conditions that relax the boundedness assumption made by Pennanen and Perkki\"o in [Stochastic programs without duality gaps, Math. Program., 136(1):91--110,2012]. We apply the result to a utility maximization problem with an unbounded utility.
\end{abstract}
\section{Introduction}

Let $(\Omega,\F,P)$ be a complete probability space with a filtration $\FF$ of complete sub sigma-algebras of $\F$ and consider the parametric dynamic stochastic optimization problem
\begin{equation}\label{p}\tag{$P_u$}
\minimize\quad Ef(x,u):=\int f(x(\omega),u(\omega),\omega)dP(\omega)\quad\text{over $x\in\N$},
\end{equation}
where, for given integers $n_t$ and $m$
\[
\N = \{(x_t)_{t=0}^T\,|\,x_t\in L^0(\Omega,\F_t,P;\reals^{n_t})\},
\]
$u\in L^0(\Omega,\F,P;\reals^m)$ is the parameter and $f$ is an extended real-valued $\B(\R^n)\otimes\B(\R^m)\otimes\F$-measurable function, where $n:=n_0+ \ldots +n_T$. Here and in what follows, we define the expectation of a measurable function $\phi$ as $+\infty$ unless the positive part $\phi^+$ is integrable\footnote{In particular, the sum of extended real numbers is defined as $+\infty$ if any of the terms equals $+\infty$.}. The function $Ef$ is thus well-defined extended real-valued function on $\N\times L^0(\Omega,\F,P;\reals^m)$. We will assume throughout that the function $f(\cdot,\cdot,\omega)$ is {\em proper}, {\em lower semicontinuous} and {\em convex} for every $\omega\in\Omega$.

It was shown in \cite{pen11c} that, when applied to \eqref{p}, the conjugate duality framework of Rockafellar~\cite{roc74} allows for a unified treatment of many well-known duality frameworks in operations research and mathematical finance. In that context, the absence of a duality gap is equivalent to the closedness of the optimal value function
\[
\varphi(u)=\inf_{x\in\N}Ef(x,u)
\]
over an appropriate space of measurable functions $u$. Pennanen and Perkki\"o~\cite{pp12} gave a simple algebraic condition on the integrand $f$ that guarantees that the optimum in \eqref{p} is always attained and $\varphi$ is closed; see Section~\ref{sec:lsc} below. The condition provides a far reaching generalization of the classical no-arbitrage condition in financial mathematics but it also assumes that $f$ is bounded from below. In the financial context, the lower bound excludes, e.g., portfolio optimization problems with utility functions that are not bounded from above. That such a bound is superfluous is suggested e.g.\ by the results of R\'asonyi and Stettner~\cite{rs5} where the existence of solutions to portfolio optimization in the classical perfectly liquid market was obtained for more general utility functions.

This article relaxes the boundedness assumption on $f$. A key result for this is Lemma~\ref{lem:1} which, in turn, is based on local martingale techniques that are well-known in mathematical finance. We also prove an expression for the recession function of the optimal value function in terms of $f$. This is of interest when, e.g., one studies robust no arbitrage and robust no scalable arbitrage properties and the related dominating markets; see \cite{krs2,pp10}. 

In Section~\ref{sec:MF} we apply the main results to an optimal investment problem in liquid markets. Under the assumptions that prices are bounded from below and that the initial prices are bounded, we recover an existence result by R\'asonyi and Stettner~\cite[Theorem 6.2]{rs5} who assumed a well-known asymptotic elasticity condition of the utility function. Moreover, here we obtain the closedness of the associated value function defined on an appropriate space of future liabilites that is important in valuation of contingent claims; see \cite{pen14b}. During the recent years, market models with proportional transaction costs or other nonlinear illiquidity effects have received an increasing interest \cite{cms14,jk95,pp10}. Extensions of our results to such models will be analyzed in a forthcoming article.

\section{Closedness of the value function}\label{sec:lsc}

In this section we will first establish the closedness of the value function associated with \eqref{p} in the space of measurable functions $L^0$ with respect to the convergence in measure. Using this, we will then establish the closedness on locally convex subspaces of $L^0$. Recall that a function is {\em closed} if it is lower semicontinuous and either proper or a constant. A function is {\em proper} if it never takes the value $-\infty$ and it is finite at some point.

An extended real-valued function $h$ on $\R^n\times\Omega$, for a complete probability space $\Omega$, is a \emph{normal integrand} if $h$ is jointly measurable and $h(\cdot,\omega)$ is lower semicontinuous for all $\omega$; see \cite[Corollary 14.34]{rw98}. Thus,  we may say that $f$ is an $\F$-measurable {\em proper convex normal integrand} on $\R^n\times\R^m$.

In all of the results, the statements concerning the recession functions are new. For a normal integrand, $f^\infty$ is defined $\omega$-wise as the {\em recession function} of $f(\cdot,\cdot,\omega)$, i.e.
\[
f^\infty(x,u,\omega) = \sup_{\alpha>0}\frac{f(\bar x+\alpha x,\bar u+\alpha u,\omega) - f(\bar x,\bar u)}{\alpha},
\]
which is independent of the choice $(\bar x,\bar u)\in\dom f(\cdot,\cdot,\omega)$. By \cite[Theorem~8.5]{roc70a}, the supremum equals the limit as $\alpha\to+\infty$.

\begin{theorem}\label{thm:pp12}
Assume that there exists $m\in L^1$ such that
\[
f(x,u,\omega)\ge m(\omega)
\]
for all $(x,u,\omega)\in\R^n\times\R^m\times\Omega$ and that
\[
\L=\{x\in\N\mid f^\infty(x,0) \le 0\}
\] 
is a linear space. Then
\[
\varphi(u)=\inf_{x\in\N} E f(x,u)
\]
is closed and proper in $L^0$ and the infimum is attained for every $u\in L^0$. Moreover,
\[
\varphi^\infty(u)=\inf_{x\in\N} Ef^\infty(x,u).
\]
\end{theorem}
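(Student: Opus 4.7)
The closedness of $\varphi$, its properness, and attainment of the infimum in $\N$ are the content of \cite[Theorem~2]{pp12}, so the plan is to focus on the new ingredient: the recession identity $\varphi^\infty(u) = \psi(u)$ with $\psi(u) := \inf_{x \in \N} Ef^\infty(x,u)$. Fix $\bar u \in \dom\varphi$ and, by attainment, choose $\bar x \in \N$ with $Ef(\bar x, \bar u) = \varphi(\bar u)$.

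For the bound $\varphi^\infty \le \psi$, the standard monotonicity of difference quotients of a closed proper convex function gives
\[
f(\bar x + \alpha x, \bar u + \alpha u, \omega) \le f(\bar x, \bar u, \omega) + \alpha\, f^\infty(x, u, \omega)
\]
for every $x \in \N$, every $\alpha > 0$ and $P$-a.e.\ $\omega$. Integrating and using $\bar x + \alpha x \in \N$ yields $\varphi(\bar u + \alpha u) \le \varphi(\bar u) + \alpha Ef^\infty(x, u)$. Since $\varphi$ is closed proper on $L^0$, dividing by $\alpha$ and sending $\alpha \to \infty$ gives $\varphi^\infty(u) \le Ef^\infty(x,u)$; infimum over $x \in \N$ finishes this half.

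For the reverse inequality we may assume $\varphi^\infty(u) < +\infty$. For each $\alpha > 0$, pick (by attainment) $x_\alpha \in \N$ minimizing $Ef(\cdot, \bar u + \alpha u)$ and set $y_\alpha := (x_\alpha - \bar x)/\alpha \in \N$. The convex combination of $(\bar x, \bar u)$ and $(x_\alpha, \bar u + \alpha u)$ at ratio $\beta/\alpha \in (0,1)$ gives, after taking expectation,
\[
\frac{Ef(\bar x + \beta y_\alpha, \bar u + \beta u) - \varphi(\bar u)}{\beta} \le \frac{\varphi(\bar u + \alpha u) - \varphi(\bar u)}{\alpha}, \quad 0 < \beta < \alpha.
\]
The strategy is then to extract $y_\alpha \to y \in \N$ almost surely along a subsequence; apply $\omega$-wise lsc of $f$ together with Fatou's lemma (available because $f \ge m \in L^1$) on the left; pass $\alpha \to \infty$ on the right to recognize $\varphi^\infty(u)$; and finally send $\beta \to \infty$, invoking monotone convergence to identify the left-hand side with $Ef^\infty(y,u)$, yielding $\psi(u) \le Ef^\infty(y,u) \le \varphi^\infty(u)$.

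The hard part is extracting the almost-sure limit $y$, because the adapted sequence $y_\alpha$ need not be $\omega$-wise bounded. My plan is to mirror the reduction-by-$\L$ device of \cite{pp12}: on the set where $\|y_\alpha(\omega)\|$ blows up, cluster points of the normalized sequence $y_\alpha/\max(1,\|y_\alpha\|)$ must satisfy the recession inequality $f^\infty(\cdot, 0) \le 0$ almost surely (otherwise the asymptotic cost would grow faster than $\alpha$, contradicting $\varphi^\infty(u)<+\infty$) and hence lie in $\L$. Since $\L$ is a linear space, a measurable multiple of such a direction can be subtracted from $x_\alpha$ without breaking adaptedness and without raising the expected cost, producing a new optimal sequence whose normalized version is bounded. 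Iterating coordinate by coordinate reduces to the $\omega$-wise bounded case, in which a measurable diagonal selection (e.g., along the lines of \cite[Theorem~14.33]{rw98}) delivers $y \in \N$, completing the argument.
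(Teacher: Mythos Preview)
Your argument is correct and essentially matches the paper's: both defer closedness, properness and attainment to \cite[Theorem~2]{pp12}, obtain the easy inequality $\varphi^\infty\le\psi$ from the difference-quotient bound, and for the reverse inequality produce a near-optimal sequence with $Ef_1$ uniformly bounded and then feed it into the compactness machinery of \cite{pp12} followed by Fatou. The only cosmetic differences are that the paper merges your two limits (first $\alpha\to\infty$, then $\beta\to\infty$) into a single Fatou step via the monotonicity of $\alpha\mapsto f_\alpha$, and that the extraction in \cite{pp12} actually delivers forward \emph{convex combinations} rather than subsequences---your convexity inequality passes to such combinations because $f$ is convex in $x$, so this is harmless, but your last paragraph should invoke that device rather than a pointwise cluster-point selection, which does not by itself preserve adaptedness across the filtration.
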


\begin{proof}
Theorem 2 of \cite{pp12} gives closedness of $\varphi$ with respect to a locally convex topological space $\U\subset L^0$ but the main argument of the proof establishes closedness with respect to the convergence in measure provided that $f$ has an integrable lower bound.

Let $\bar u\in\dom\varphi$ and $\bar x\in \N$ be such that $\varphi(\bar u)=Ef(\bar x,\bar u)$. Such $\bar x$ exists by \cite[Theorem 2]{pp12}. We have that
\begin{align*}
\varphi^\infty(u) &= \sup_{\alpha>0}\frac{\varphi(\bar u + \alpha u)-\varphi(\bar u)}{\alpha} = \sup_{\alpha>0}\inf_{x\in \N}Ef_\alpha(x,u),
\end{align*}
where
\[
f_\alpha(x,u) = \frac{f(\bar x+\alpha x,\bar u + \alpha u)-f(\bar x,\bar u)}{\alpha}.
\]
We have
\[
\varphi^\infty(u) \le \inf_{x\in \N}\sup_{\alpha>0}Ef_\alpha(x,u) \le \inf_{x\in \N}E[\sup_{\alpha>0}f_\alpha(x,u)] = \inf_{x\in X} Ef^\infty(x,u).
\]
To prove the converse, let $a>\sup_{\alpha>0}\inf_{x\in\N}Ef_\alpha(x,u)$. For every positive integer $\alpha$, there is an $x^\alpha\in\N$ with $Ef_\alpha(x^\alpha,u)<a$. The functions $f_\alpha$ are non-decreasing in $\alpha$, so $Ef_1(x^\alpha,u)<a$ and we may proceed as in the proof of \cite[Theorem 2]{pp12} (where we may assume that $x_t^\alpha\in N_t^\perp$ for every $t$ by \cite[Lemma 2]{pp12}) to obtain a sequence of convex combinations $\tilde x^\alpha= \co\{x^{\alpha'} \mid \alpha'\ge\alpha\}$ such that $\tilde x^\alpha\to\bar x$ almost surely. By Fatou's Lemma,
\[
Ef^\infty(\bar x,u)\le a,
\]
which completes the proof.
\end{proof}
 
When extending the theorem above to objectives that do not have a uniform lower bound, a key role is played by the following lemma, where 
\[
\N^\perp = \{v\in L^1(\Omega,\F,\reals^n)\,|\,E(x\cdot v)=0\ \forall x\in\N^\infty\}
\]
is the \emph{annihilator} of $\N^\infty=L^\infty\cap\N$.

\begin{lemma}\label{lem:1}
Let $x\in\N$ and $v\in\N^\perp$. If $E[x\cdot v]^+\in L^1$, then $E(x\cdot v)=0$.
\end{lemma}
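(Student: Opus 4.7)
The plan is to approximate $x$ by bounded adapted processes so as to invoke the annihilator hypothesis, and then pass to the limit using the one-sided integrability $(x\cdot v)^+\in L^1$. After establishing $x\cdot v\in L^1$, a symmetric application of the same argument to $-x\in\N$ closes the proof.

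Concretely, define the $(\F_t)$-stopping time $\tau_n:=\inf\{t\le T\colon|x_t|>n\}$, with $\tau_n:=T+1$ if no such $t$ exists, and set $x^n_t:=x_t\one_{\{t<\tau_n\}}$. Since $\{\tau_n>t\}=\bigcap_{s\le t}\{|x_s|\le n\}\in\F_t$, each $x^n_t$ is $\F_t$-measurable; being bounded by $n$, $x^n\in\N^\infty$, so by the definition of $\N^\perp$ one has $E[x^n\cdot v]=0$ for every $n$. Because $\max_{t\le T}|x_t|<\infty$ almost surely, $\tau_n\uparrow T+1$ a.s., and hence $x^n\cdot v\to x\cdot v$ almost surely.

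The first substantive step is to show $E[x\cdot v]\ge 0$. This is a discrete Ansel--Stricker-type assertion: the sequence $(x^n\cdot v)_n$ of zero-mean, a.s.-convergent random variables is a localization of the formal local martingale $\sum_{t<\cdot}x_t\cdot v_t$, whose increments have vanishing conditional expectations $E[x_t\cdot v_t\mid\F_t]=x_t\cdot E[v_t\mid\F_t]=0$, and the hypothesis $(x\cdot v)^+\in L^1$ plays the role of the integrable upper majorant needed for the ``local martingale bounded above at the terminal time by an integrable variable has nonnegative terminal expectation'' conclusion. Combined with $(x\cdot v)^+\in L^1$, the inequality $E[x\cdot v]\ge 0$ yields $E[(x\cdot v)^-]\le E[(x\cdot v)^+]-E[x\cdot v]\le E[(x\cdot v)^+]<\infty$, so $x\cdot v\in L^1$. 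Noting that $-x\in\N$ and $(-x)\cdot v=-(x\cdot v)$ has integrable positive part $(x\cdot v)^-$, the same argument applied to $-x$ gives $E[-x\cdot v]\ge 0$, i.e.\ $E[x\cdot v]\le 0$; together, $E[x\cdot v]=0$.

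The main obstacle is precisely the inequality $E[x\cdot v]\ge 0$. A direct application of Fatou's lemma to $-(x^n\cdot v)$ would require a uniform pointwise upper bound $x^n\cdot v\le W$ with $W\in L^1$, and the natural candidate $W=(x\cdot v)^+$ does not work because the stopping-time truncation does not yield $x^n\cdot v\le(x\cdot v)^+$. One must instead exploit the martingale cancellation inside $x^n\cdot v=\sum_{t<\tau_n}x_t\cdot v_t$, decomposing according to the value of $\tau_n$ and using $E[v_t\mid\F_t]=0$ on each $\F_t$-measurable piece, to produce the needed estimate $\limsup_n E[(x^n\cdot v)^+]\le E[(x\cdot v)^+]$ from which (together with the identity $E[(x^n\cdot v)^+]=E[(x^n\cdot v)^-]$) the inequality $E[x\cdot v]\ge 0$ follows via Fatou applied to the negative parts. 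Making this cancellation rigorous is the place where the ``local martingale techniques from mathematical finance'' referred to in the introduction are essential.
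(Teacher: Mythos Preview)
Your proposal is not a complete proof: you explicitly leave the central step unproved. You correctly set up the truncations $x^n\in\N^\infty$ with $E[x^n\cdot v]=0$ and $x^n\cdot v\to x\cdot v$ a.s., and you correctly diagnose that Fatou cannot be applied directly. But the substitute you propose, namely $\limsup_n E[(x^n\cdot v)^+]\le E[(x\cdot v)^+]$, is neither proved nor obviously true, and you end by saying that ``making this cancellation rigorous is the place where the local martingale techniques \dots\ are essential.'' That sentence is an admission that the hard part is missing. There is also a structural issue with your ``formal local martingale $\sum_{t<\cdot}x_t\cdot v_t$'': the summands $x_t\cdot v_t$ are not $\F_t$-measurable (only $v\in L^1(\F)$ is assumed), so the partial sums do not form an adapted process with respect to $(\F_t)$, and the identity $E[x_t\cdot v_t\mid\F_t]=0$ does not by itself give a martingale-difference structure in that filtration.

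The paper's proof resolves both difficulties in one stroke. For each coordinate $i$ it takes the Doob martingale $\xi^i_{t'}=E_{t'}v_i$, $t'=0,\dots,T{+}1$, notes that $v\in\N^\perp$ forces $\xi^i_{t'}=0$ for $t'\le i$, and forms $m^i_{t}=\sum_{t'\le t}x_i\cdot\Delta\xi^i_{t'}$, which is a genuine local martingale because $x_i$ is $\F_{t'-1}$-measurable whenever $\Delta\xi^i_{t'}\ne 0$. Then $m=\sum_i m^i$ is a local martingale with $m_0=0$ and $m_{T+1}=x\cdot v$. The hypothesis $(x\cdot v)^+\in L^1$ means exactly $Em_{T+1}<\infty$ in the paper's convention, and the paper then invokes Jacod--Shiryaev (a discrete-time local martingale whose terminal value satisfies this one-sided integrability is a true martingale) to conclude $E(x\cdot v)=Em_{T+1}=Em_0=0$. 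In other words, the ``local martingale technique'' you gesture at is not an estimate on $E[(x^n\cdot v)^+]$ at all; it is the construction of the \emph{right} local martingale (living on the enlarged time axis $0,\dots,T{+}1$, not on $0,\dots,T$) together with a citation of the Jacod--Shiryaev theorem. Your truncation $x^n\cdot v$ is not the stopped process $m^{\tau_n}_{T+1}$, which is why your limit argument stalls.
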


\begin{proof}
Any $\xi\in L^1$ can be represented as $\xi=\sum_{t'=0}^{T+1}\Delta \xi_{t'}$ where $\xi_{-1}=0$ and $(\xi_{t'})_{t'=0}^{T+1}$ is the martingale  defined as $\xi_{T+1}=\xi$ and $\xi_{t'}=E_{t'}\xi$. For $\xi^i=v_i$, we have $\xi^i_{t'}=0$ for all $t'\le i$, so $x_t\cdot v_t=m^i_{T'+1}$, where $m^i$ is a local martingale defined by
\[
m^i_{t}=\sum_{t'=0}^{t} x_i\cdot \Delta \xi^i_{t'}.
\]
Thus $x \cdot v =m_{T+1}$ for $m=\sum_{i=0}^{T} m^i$. Since $Em_{T+1}<\infty$, we have that $m$ is a martingale (\cite[Theorem 2]{js98}) and thus $E(x\cdot v)=E m_{T+1}= E m_0=0$.
\end{proof}

The following additivity property of the extended real-valued expectation will often be used without a mention.

\begin{lemma}\label{lem:2}
Let $\phi_1$ and $\phi_2$ be extended real-valued measurable functions. If either $\phi_1^+,\phi_2^+\in L^1$ or $\phi_2\in L^1$, then
\[
E(\phi_1+\phi_2) = E\phi_1+E\phi_2.
\]
\end{lemma}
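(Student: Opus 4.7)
The plan is to unfold the convention for the extended-real expectation and then handle the two hypotheses as separate cases, in each case reducing to standard $L^1$-linearity via elementary two-sided inequalities on positive and negative parts. Recall the convention: $E\phi=+\infty$ unless $\phi^+\in L^1$, so $E\phi\in[-\infty,+\infty)$ as soon as $\phi^+\in L^1$.

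First I would dispose of Case 1, where $\phi_1^+,\phi_2^+\in L^1$. Here both sides are unambiguous: the inequality $(\phi_1+\phi_2)^+\le\phi_1^++\phi_2^+$ makes the LHS well-defined in $[-\infty,+\infty)$, while on the RHS no $+\infty$ can appear, so there is no indeterminate sum. If both $E\phi_i$ are finite, the $\phi_i$ are in $L^1$ and the identity is the usual linearity of the Lebesgue integral. Otherwise, say $E\phi_1=-\infty$, so $\phi_1^-\notin L^1$. The pointwise bound $(\phi_1+\phi_2)^-\ge\phi_1^--\phi_2^+$ (verified case-by-case on the sign of $\phi_1$) then forces $E(\phi_1+\phi_2)^-=+\infty$, hence $E(\phi_1+\phi_2)=-\infty$, matching the RHS.

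Next I would address Case 2, where $\phi_2\in L^1$. If $\phi_1^+\in L^1$ we are back in Case 1, so assume $\phi_1^+\notin L^1$, which makes $E\phi_1=+\infty$ and hence the RHS equals $+\infty$. Now I use the symmetric inequality $(\phi_1+\phi_2)^+\ge\phi_1^+-\phi_2^-$; integrating and using $\phi_2^-\in L^1$ gives $E(\phi_1+\phi_2)^+=+\infty$, so by the convention the LHS is $+\infty$ as well.

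I do not expect any genuine obstacle here; the whole content is bookkeeping with the convention and the two pointwise bounds $(a+b)^\pm \ge a^\pm - b^\mp$. The only thing to be careful about is to make sure that in Case 1 the RHS is well-defined in the extended reals (which it is, since both $E\phi_i<+\infty$), and in Case 2 to check that when $E\phi_1=+\infty$ the lower bound on $(\phi_1+\phi_2)^+$ is not vacuous, which is precisely what $\phi_2^-\in L^1$ guarantees.
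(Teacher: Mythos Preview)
Your argument is correct. The paper itself states this lemma without proof (it is introduced only as ``the following additivity property of the extended real-valued expectation will often be used without a mention''), so there is nothing to compare against; your two-case analysis with the pointwise bounds $(a+b)^\pm\ge a^\pm-b^\mp$ is exactly the kind of routine verification the author is implicitly leaving to the reader, and it goes through as written.
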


The main contribution of this paper is contained in the following result which relaxes the lower bound of $f$ with respect to $x$.

\begin{theorem}\label{thm:1}
Assume that there exist $\lambda>0$ and $(v,\beta)\in\N^\perp\times L^1$ such that
\[
f(x,u,\omega)\ge x\cdot v(\omega) + \lambda [x\cdot v(\omega)]^+ + \beta(\omega)
\]
for all $(x,u,\omega)\in\R^n\times\R^m\times\Omega$ and that
\[
\L=\{x\in\N\mid f^\infty(x,0) \le 0\}
\] 
is a linear space. Then
\[
\varphi(u)=\inf_{x\in\N} E f(x,u)
\]
is closed and proper in $L^0$ and the infimum is attained for every $u\in L^0$. Moreover,
\[
\varphi^\infty(u)=\inf_{x\in\N} Ef^\infty(x,u).
\]
\end{theorem}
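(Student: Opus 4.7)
The plan is to reduce Theorem~\ref{thm:1} to Theorem~\ref{thm:pp12} by subtracting the affine lower bound from $f$. Define the auxiliary integrand
\[
\tilde f(x,u,\omega) := f(x,u,\omega) - x\cdot v(\omega) - \beta(\omega),
\]
which is a proper convex normal integrand satisfying $\tilde f(x,u,\omega) \ge \lambda[x\cdot v(\omega)]^+ \ge 0$ by hypothesis. Since the recession function of the affine map $(x,u)\mapsto x\cdot v(\omega)+\beta(\omega)$ is $(x,u)\mapsto x\cdot v(\omega)$, one has $\tilde f^\infty(x,u,\omega) = f^\infty(x,u,\omega) - x\cdot v(\omega)$ pointwise.

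The first key step is to show $\tilde\L := \{x\in\N \mid \tilde f^\infty(x,0) \le 0\}=\L$, so that the linearity hypothesis transfers. Dividing the pointwise lower bound by $\alpha$ and letting $\alpha\to\infty$ yields the recession inequality $f^\infty(x,0,\omega) \ge x\cdot v(\omega) + \lambda[x\cdot v(\omega)]^+$. For $\tilde\L\subseteq\L$: if $\tilde f^\infty(x,0)\le 0$ then $f^\infty(x,0)\le x\cdot v$, which combined with the recession inequality forces $[x\cdot v]^+=0$ and hence $f^\infty(x,0)\le x\cdot v\le 0$. For $\L\subseteq\tilde\L$, linearity of $\L$ places both $x$ and $-x$ in $\L$, and sublinearity of $f^\infty(\cdot,0)$ forces $f^\infty(x,0)=f^\infty(-x,0)=0$; applying the recession inequality at $\pm x$ then forces $x\cdot v=0$ a.s., so $\tilde f^\infty(x,0)=0$.

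With $\tilde\L=\L$ linear and $\tilde f\ge 0$, Theorem~\ref{thm:pp12} applies to $\tilde f$, yielding closedness and properness of $\tilde\varphi(u):=\inf_{x\in\N} E\tilde f(x,u)$, attainment of the infimum, and $\tilde\varphi^\infty(u)=\inf_{x\in\N} E\tilde f^\infty(x,u)$. To transfer these conclusions to $\varphi$, I will show $E\tilde f(x,u)=Ef(x,u)-E\beta$ for every $x\in\N$ and $u\in L^0$. When $Ef(x,u)^+<\infty$, splitting on $\{x\cdot v\ge 0\}$ and using the hypothesis yields $[x\cdot v]^+ \le (f(x,u)^+ + |\beta|)/(1+\lambda)$, so $[x\cdot v]^+\in L^1$ and Lemma~\ref{lem:1} gives $E(x\cdot v)=0$; symmetrically, if $E\tilde f(x,u)<\infty$, the bound $\tilde f\ge\lambda[x\cdot v]^+$ forces $[x\cdot v]^+\in L^1$, and again $E(x\cdot v)=0$. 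Hence $Ef(x,u)<\infty$ if and only if $E\tilde f(x,u)<\infty$, and in that case the two expectations differ by the constant $E\beta$, so $\varphi=\tilde\varphi+E\beta$ with the same argmin.

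The same integrability argument applied with $f^\infty$ and $\tilde f^\infty$ in place of $f$ and $\tilde f$ gives $\inf_{x\in\N} Ef^\infty(x,u)=\inf_{x\in\N} E\tilde f^\infty(x,u)$, whence $\varphi^\infty(u)=\tilde\varphi^\infty(u)=\inf_{x\in\N} Ef^\infty(x,u)$. The hardest step is pinning down $\tilde\L=\L$: the recession inequality $f^\infty(x,0)\ge x\cdot v+\lambda[x\cdot v]^+$ is intrinsically one-sided, and only by applying it simultaneously to $x$ and $-x$ in tandem with the linearity of $\L$ does one force $x\cdot v=0$ and thereby align the two linearity conditions.
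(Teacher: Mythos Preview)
Your proof is correct and follows essentially the same route as the paper: subtract the linear term $x\cdot v$ from $f$ to obtain an integrand with an integrable lower bound, check that the linearity condition transfers, apply Theorem~\ref{thm:pp12}, and then use Lemma~\ref{lem:1} to show that $Ef$ and the shifted expectation agree on their effective domains.

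Two small remarks on the comparison. First, the paper subtracts only $x\cdot v$ (not $\beta$), but this is purely cosmetic. Second, for the inclusion $\L\subseteq\tilde\L$ the paper does \emph{not} invoke the linearity of $\L$: it argues directly that $x\in\L$ forces $[x\cdot v]^+=0$ a.s.\ from the recession inequality, and then Lemma~\ref{lem:1} (together with $x\cdot v\le 0$) yields $x\cdot v=0$ a.s. Your closing comment that ``only by applying it simultaneously to $x$ and $-x$ in tandem with the linearity of $\L$ does one force $x\cdot v=0$'' therefore overstates the difficulty --- Lemma~\ref{lem:1} handles that direction without linearity. Your alternative argument via $\pm x$ is nonetheless valid and has the minor virtue of not needing Lemma~\ref{lem:1} at that particular step.
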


\begin{proof}
Note first that $\L=\{x\in\N\mid f^\infty(x,0) - x\cdot v\le 0\}$. Indeed, the lower bound on $f$ implies $f^\infty(x,0,\omega)\ge x\cdot v(\omega) + \lambda[x\cdot v(\omega)]^+$, so if $x$ belongs either to $\L$ or $\{x\in\N\mid f^\infty(x,0) - x\cdot v\le 0\}$, then $[x\cdot v]^+\le 0$ and thus, $x\cdot v=0$, by Lemma~\ref{lem:1}. Applying Theorem~\ref{thm:pp12} to the normal integrand $f_v(x,u,\omega) := f(x,u,\omega) - x\cdot v(\omega)$ we see that
\[
\varphi_v(u)=\inf_{x\in\N} E f_v(x,u)
\]
is closed and proper, the infimum is attained for every $u\in L^0$ and that
\[
\varphi_v^\infty(u)=\inf_{x\in\N} E f_v^\infty(x,u).
\] 
If either $(x,u)\in\dom Ef$ or $(x,u)\in\dom Ef_v$, the lower bound implies $[x\cdot v]^+\in L^1$ so that $x\cdot v\in L^1$ and $E(x\cdot v)=0$, by Lemma~\ref{lem:1}. In both cases, Lemma~\ref{lem:2} gives
\[
Ef_v(x,u) = Ef(x,u)-E(x\cdot v) = Ef(x,u)
\]
so that $\varphi_v=\varphi$. Similarly, $Ef^\infty=Ef_v^\infty$ so that $\varphi^\infty(u)=\inf_{x\in\N} Ef^\infty(x,u)$.
\end{proof}

Theorem~\ref{thm:1} is not valid if, in the lower bound for $f$, we allow $\lambda$ to be zero.
\begin{example}
Let $n=1$ and let $\F_0$ be such that there exist $\alpha\in L^1(\F_0)$ with $\alpha\notin L^2$ and nonzero $\beta\in L^\infty$ independent of $\F_0$  with $E[\beta\mid \F_0]=0$. We define
\[
f(x,u,\omega)=\|x-\alpha(\omega)\|+x\cdot v(\omega),
\]
where $ v=\alpha\beta$ so that $v\in \N^\perp$ and $f(x)\ge x\cdot v$. The reader may verify that
\[
\inf_{x\in\N}Ef(x,u)=0
\]
and the only nontrivial candidate for $Ef(x,u)=0$ is $x=\alpha$, but $Ef(\alpha,u)=\infty$.
\end{example}

The rest of this section is concerned with closedness of the value function on locally convex subspaces of $L^0$. We will assume from now on that the parameter $u$ belongs to a decomposable space $\U\subset L^0$ which is in separating duality with another decomposable space $\Y\subset L^0$ under the bilinear form
\[
\langle u,y\rangle = E(u\cdot y).
\]
Recall that $\U$ is {\em decomposable} if
\[
\one_Au+\one_{\Omega\setminus A}u'\in\U
\]
whenever $A\in\F$, $u\in\U$ and $u'\in L^\infty$; see e.g.\ \cite{roc76}.

The families of closed convex sets coincide in the weak $\sigma(\U,\Y)$ and in the Mackey topologies $\tau(\U,\Y)$ \cite[p. 132]{sch71}, so we may say that $\varphi$ is closed in $\U$ whenever it is so with respect to either of the topologies. The closed convex function 
\[
\varphi^*(y)=\sup_{u\in \U}\{\langle u,y\rangle-\varphi(u)\}
\]
is called the \emph{conjugate} of $\varphi$. When $\varphi$ is closed, it has the {\em dual representation}
\begin{align*}
\varphi(u) = \sup_{y\in\Y}\{\langle u,y\rangle - \varphi^*(y)\};
\end{align*}
see \cite[Theorem 5]{roc74}. Dual representations are behind many fundamental results in mathematical finance as in the classical formula where superhedging prices of contingent claims in liquid markets are given in terms of martingale measures; for this and some recent developments in more general market models, see \cite{pen12a} and \cite{pen14a}. 

Using traditional topological arguments on decomposable spaces (see e.g.\ Rockafellar~\cite{roc76} or Ioffe~\cite{iof77}), we may relax in Theorem~\ref{thm:2} the lower bound on $f$ with respect to $u$ as well. The following theorem generalizes \cite[Theorem 2]{pp12} by relaxing the uniform lower boundedness of $f$ with respect to $x$. 

\begin{theorem}\label{thm:2}
Assume that there exist $\lambda>0$, $v\in\N^\perp$ and a convex normal integrand $g$ on $\reals^m\times\Omega$ such that $Eg$ is $\tau(\U,\Y)$-continuous, $Eg^\infty$ is finite on $\U$,
\[
f(x,u,\omega)\ge x\cdot v(\omega) + \lambda [x\cdot v(\omega)]^+ - g(u,\omega)
\]
for all $(x,u,\omega)\in\R^n\times\R^m\times\Omega$ and such that
\[
\L=\{x\in\N\mid f^\infty(x,0)\le 0\}
\] 
is a linear space. Then
\[
\varphi(u)=\inf_{x\in\N} E f(x,u)
\]
is closed and proper in $\U$ and the infimum is attained for every $u\in\U$. Moreover,
\[
\varphi^\infty(u)=\inf_{x\in\N} E f^\infty(x,u).
\]
\end{theorem}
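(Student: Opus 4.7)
The plan is to mirror the strategy of Theorem~\ref{thm:1} and reduce to the $\U$-closedness statement already provided by \cite[Theorem~2]{pp12}. The same shift
\[
f_v(x,u,\omega) := f(x,u,\omega) - x\cdot v(\omega)
\]
is used, so that $f_v$ now satisfies the purely $u$-dependent lower bound $f_v(x,u,\omega) \ge -g(u,\omega)$.

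First I would check, exactly as in the proof of Theorem~\ref{thm:1}, that the recession set is unchanged. The assumed bound yields $f^\infty(x,0,\omega)\ge x\cdot v(\omega)+\lambda[x\cdot v(\omega)]^+$, so any $x\in\N$ with either $f^\infty(x,0)\le 0$ or $f_v^\infty(x,0)\le 0$ must satisfy $[x\cdot v]^+\le 0$ a.s., and Lemma~\ref{lem:1} then forces $E(x\cdot v)=0$ and in fact $x\cdot v\equiv 0$. Hence $\L$ is the common recession set at $u=0$ of $f$ and $f_v$. Next I would apply the full $\U$-version of \cite[Theorem~2]{pp12}, which allows a lower bound of the form $-g(u,\omega)$ whenever $Eg$ is $\tau(\U,\Y)$-continuous and $Eg^\infty$ is finite on $\U$. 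This gives that $\varphi_v(u)=\inf_{x\in\N}Ef_v(x,u)$ is closed and proper on $\U$, the infimum is attained for every $u\in\U$, and $\varphi_v^\infty(u)=\inf_{x\in\N}Ef_v^\infty(x,u)$.

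Finally I would transfer these conclusions from $f_v$ back to $f$ using Lemmas~\ref{lem:1} and \ref{lem:2}. Rewriting the bound as
\[
f(x,u,\omega)+g(u,\omega)\ge (\lambda+1)[x\cdot v(\omega)]^+-[x\cdot v(\omega)]^-
\]
and taking positive parts gives $(\lambda+1)[x\cdot v]^+\le [f(x,u)+g(u)]^+$. For $u\in\U$, $Eg(u)$ is finite by continuity of $Eg$, so if either $(x,u)\in\dom Ef$ or $(x,u)\in\dom Ef_v$, the right-hand side is integrable; thus $[x\cdot v]^+\in L^1$, Lemma~\ref{lem:1} yields $E(x\cdot v)=0$, and Lemma~\ref{lem:2} gives $Ef_v(x,u)=Ef(x,u)$. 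Hence $\varphi_v=\varphi$, and attainers for $\varphi_v(u)$ are attainers for $\varphi(u)$. The same sign manipulation applied to the recession bound $f^\infty(x,u,\omega)\ge x\cdot v(\omega)+\lambda[x\cdot v(\omega)]^+-g^\infty(u,\omega)$, combined with the hypothesis that $Eg^\infty$ is finite on $\U$, delivers $Ef_v^\infty(x,u)=Ef^\infty(x,u)$ on the relevant domains and hence the recession formula for $\varphi$.

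The step I expect to require the most care is invoking \cite[Theorem~2]{pp12} in the $u$-dependent form, i.e.\ tracing through the decomposable-space / Rockafellar--Ioffe argument hinted at in the paragraph preceding the theorem to confirm that the lower bound $-g(u,\omega)$ indeed suffices once $Eg$ is $\tau(\U,\Y)$-continuous and $Eg^\infty$ finite on $\U$. Everything else is routine: the recession of the lower bound is $x\cdot v+\lambda[x\cdot v]^+-g^\infty(u,\omega)$ via the elementary identity $\lim_{\alpha\to\infty}[-g(\bar u+\alpha u,\omega)]/\alpha=-g^\infty(u,\omega)$, and the integrability estimate above together with Lemma~\ref{lem:1} handles the reduction $\varphi_v\leftrightarrow\varphi$.
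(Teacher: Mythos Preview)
Your reduction runs in the opposite order from the paper's. The paper adds $g(u,\omega)$ to $f$ and invokes Theorem~\ref{thm:1} (which already carries the recession formula) to get $\varphi_g$ closed in $L^0$; it then upgrades to closedness in $\U$ via \cite[Lemma~6]{pp12} and subtracts the Mackey-continuous $Eg$, after which the recession identity follows from $f_g^\infty=f^\infty+g^\infty$ together with $(Eg)^\infty=Eg^\infty$. You instead subtract $x\cdot v$ first and want to invoke \cite[Theorem~2]{pp12} directly on $f_v$, then transfer $\varphi_v=\varphi$ via Lemmas~\ref{lem:1} and \ref{lem:2} exactly as in Theorem~\ref{thm:1}. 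For closedness and attainment this is a legitimate alternative: \cite[Theorem~2]{pp12} is already stated on $\U$ with a lower bound of the form $-g(u,\omega)$ and Mackey-continuous $Eg$, so the hypotheses match and your reduction $\varphi_v=\varphi$ goes through.

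The gap is the recession formula. As the paper states just before Theorem~\ref{thm:pp12}, ``the statements concerning the recession functions are new'' in all of these results; \cite[Theorem~2]{pp12} does \emph{not} deliver $\varphi_v^\infty(u)=\inf_{x\in\N}Ef_v^\infty(x,u)$, yet you quote it as if it did. To salvage your route you would have to rerun the Fatou-type argument from the proof of Theorem~\ref{thm:pp12} for $f_v$ under the $u$-dependent lower bound $-g(u,\omega)$ (this can be made to work, since for fixed $u\in\U$ the minorant $f_{v,1}(x,u)\ge -g(\bar u+u)-f_v(\bar x,\bar u)$ is integrable by Mackey-continuity of $Eg$), but that extra work is not in your proposal. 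The paper's ordering sidesteps the issue: by reducing first to Theorem~\ref{thm:1}, the recession formula is inherited from results already established in the present paper rather than from \cite{pp12}.
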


\begin{proof}
Applying Theorem~\ref{thm:1} to the normal integrand $f_g(x,u,\omega)=f(x,u,\omega)+g(u,\omega)$, we get that
\[
\varphi_g(u) = \inf_{x\in\N}Ef_g(x,u)
\]
is proper and closed on $L^0$, that the infimum is attained for every $u\in L^0$ and that
\[
\varphi_g^\infty(u)=\inf_{x\in\N} E f_g^\infty(x,u).
\]
Closedness in $L^0$ implies closedness in the relative topology of $L^1$ which, by \cite[Lemma~6]{pp12}, implies that $\varphi_g$ is closed in $\U$. By Lemma~\ref{lem:2}, $E[f(x,u)+g(u)]=Ef(x,u)+Eg(u)$ for all $(x,u)\in\N\times\U$ and thus
\[
\varphi_g = \varphi+Eg
\]
on $\U$. The $\tau(\U,\Y)$-continuity of $Eg$ implies the $\tau(\U,\Y)$-closedness of $\varphi=\varphi_g-Eg$. 

As to the recession function, $f^\infty_g=f^\infty+g^\infty$ \cite[Theorem 9.3]{roc70a}, 
so
\[
\varphi_g^\infty(u)=\inf_{x\in\N} E f^\infty(x,u) + Eg^\infty(u).
\]
Similarly, closedness of $\varphi$ and $Eg$ imply 
\[
\varphi_g^\infty=\varphi^\infty+(Eg)^\infty,
\]
where, by the monotone convergence theorem, $(Eg)^\infty=Eg^\infty$.
\end{proof}

The following lemma gives a sufficient condition for the first hypothesis in Theorem~\ref{thm:2}.  The characterization involves the conjugate normal integrand $f^*:\reals^n\times\reals^m\times\Omega\to\ereals$ of $f$ defined $\omega$-wise as follows
\[
f^*(v,y,\omega) = \sup\{x\cdot v + u\cdot y - f(x,u,\omega)\}.
\]
By \cite[Theorem 14.50]{rw98}, $f^*$ is indeed a normal integrand.

\begin{lemma}\label{lem:dualtest}
Assume that there exists $v\in\N^\perp$ and $\lambda>0$ such that the function
\[
\gamma(v) = \inf_{y\in\Y} Ef^*(v,y)
\]
is finite at $v$ and $(1+\lambda)v$. Then
\[
f(x,u,\omega)\ge x\cdot v(\omega) + \lambda [x\cdot v(\omega)]^+ - g(u,\omega)
\]
for all $(x,u,\omega)\in\R^n\times\R^m\times\Omega$, where $g(u,\omega)=\max_i{u\cdot y^i(\omega)}+\beta(\omega)$ for some $y^i\in\Y$ and $\beta\in L^1$.
\end{lemma}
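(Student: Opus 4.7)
The plan is to combine the Fenchel--Young inequality with a case split on the sign of $x\cdot v(\omega)$, using one dual certificate associated with $\gamma(v)$ and a second one associated with $\gamma((1+\lambda)v)$. The desired lower bound then emerges by choosing whichever certificate corresponds to the cheaper linearization of the convex function $a\mapsto a+\lambda[a]^+=\max\{a,(1+\lambda)a\}$.

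Concretely, the finiteness of $\gamma$ at $v$ and at $(1+\lambda)v$ yields $y^1,y^2\in\Y$ with $Ef^*(v,y^1)<\infty$ and $Ef^*((1+\lambda)v,y^2)<\infty$. Since $\gamma$ is finite (so bounded below), each of these expectations is an actual real number; combined with the elementary observation that properness of $f(\cdot,\cdot,\omega)$ forces $f^*(v^i,y^i,\omega)>-\infty$ pointwise, this upgrades to $f^*(v,y^1),f^*((1+\lambda)v,y^2)\in L^1$. Writing $v^1:=v$ and $v^2:=(1+\lambda)v$, the Fenchel--Young inequality then gives pointwise in $\omega$
\begin{align*}
f(x,u,\omega)&\ge x\cdot v(\omega)+u\cdot y^1(\omega)-f^*(v,y^1,\omega),\\
f(x,u,\omega)&\ge (1+\lambda)x\cdot v(\omega)+u\cdot y^2(\omega)-f^*((1+\lambda)v,y^2,\omega).
\end{align*}

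The case split is now immediate. When $x\cdot v(\omega)\ge 0$ one has $x\cdot v+\lambda[x\cdot v]^+=(1+\lambda)x\cdot v$, and the second inequality has exactly the shape required; when $x\cdot v(\omega)<0$ one has $x\cdot v+\lambda[x\cdot v]^+=x\cdot v$, and the first inequality does. Merging both regimes,
\[
f(x,u,\omega)\ge x\cdot v(\omega)+\lambda[x\cdot v(\omega)]^+-\max_{i\in\{1,2\}}\bigl(f^*(v^i,y^i,\omega)-u\cdot y^i(\omega)\bigr).
\]

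To put this into the form prescribed by the lemma, I set $\tilde y^i:=-y^i\in\Y$ (using that $\Y$ is a linear space) and $\beta:=\max_i f^*(v^i,y^i)$. Both $f^*(v^i,y^i)$ are in $L^1$, hence so is their pointwise maximum, so $\beta\in L^1$. The subadditivity $\max_i(a_i+b_i)\le\max_i a_i+\max_i b_i$ then yields
\[
\max_i\bigl(f^*(v^i,y^i)-u\cdot y^i\bigr)=\max_i\bigl(u\cdot\tilde y^i+f^*(v^i,y^i)\bigr)\le\max_i(u\cdot\tilde y^i)+\beta,
\]
so $g(u,\omega):=\max_i(u\cdot\tilde y^i(\omega))+\beta(\omega)$ does the job. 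The only step that requires genuine attention is verifying the two-sided integrability of the dual certificates in Step~1; everything else is a clean pointwise manipulation.
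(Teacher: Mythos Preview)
Your argument is correct and follows essentially the same route as the paper: pick dual certificates $y^1,y^2\in\Y$ from the finiteness of $\gamma$ at $v$ and $(1+\lambda)v$, apply Fenchel--Young at each scale, and combine the two linear minorants via $\max\{a,(1+\lambda)a\}=a+\lambda[a]^+$. The only cosmetic difference is that the paper works directly with integrable upper bounds $\beta^i\ge f^*(v^i,y^i)$ (which is all Fenchel--Young needs), whereas you argue the slightly stronger fact $f^*(v^i,y^i)\in L^1$ using the lower bound $\gamma(v^i)>-\infty$; both lead to the same $g$.
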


\begin{proof}
The assumption means that there exist $y^1,y^2\in\Y$ and $\beta^1,\beta^2\in L^1$ such that
\[
f^*(\lambda^i v,y^i) \le \beta^i
\]
where $\lambda^1=\lambda$ and $\lambda^2=1+\lambda$. Equivalently,
\[
f(x,u,\omega)\ge \lambda^i x\cdot v(\omega) + u\cdot y^i(\omega) - \beta^i(\omega)\quad i=1,2
\]
for all $(x,u,\omega)\in\R^n\times\R^m\times\Omega$, which implies
\[
f(x,u,\omega)\ge \max_i\{\lambda^i x\cdot v(\omega)\} + \min_i{u\cdot y^i(\omega)} - \min_i \beta^i(\omega)
\]
for all $(x,u,\omega)\in\R^n\times\R^m\times\Omega$.
\end{proof}

\section{Application to mathematical finance}\label{sec:MF}

We will consider optimal investment on a financial market with a finite set $J$ of assets from the point of view of an agent who has a financial liability described by a payment $u\in L^0(\F_T)$ to be made at the terminal time. As is usual, we express the terminal wealth as a stochastic integral with respect to the adapted price process $s$ so that the problem can be written as
\begin{equation}\label{alm}\tag{ALM}
\minimize\quad E  V\left(u-\sum_{t=0}^{T-1} x_t\cdot\Delta s_{t+1}\right)\quad\ovr\quad x\in\N_0,
\end{equation}
where $\N_0=\{x\in\N\mid x_T=0\ P\text{-a.s.}\}$, $x_{-1}=0$ and $V$ is a nondecreasing nonconstant convex function with $V(0)=0$. The objective of the agent is thus to find a trading strategy $x$ that hedges against the liability $u$ as well as possible as measured by expected ``disutility'' at terminal time. The change of signs $U(u)=-V(-u)$ transforms the problem into a usual maximization of expected terminal utility. There is vast literature on \eqref{alm} and on its extensions to more sophisticated models of optimal investment; see the references in \cite{pen14b}.

The following theorem gives conditions for the closedness in $\U$ of the value function $\varphi$ of \eqref{alm} and thus, the validity of the dual representation
\[
\varphi(u) = \sup_{y\in\Y}\{\langle u,y\rangle - \varphi^*(y)\}.
\]
Recall that $s$ satisfies the {\em no-arbitrage condition} if
\begin{equation}\tag{NA}\label{na}
\left\{\sum_{t=0}^{T-1}x_t\cdot\Delta s_{t+1}\midb \sum_{t=0}^{T-1}x_t\cdot\Delta s_{t+1}\ge 0,\ x\in\N \right\}=\{0\}.
\end{equation}  
\begin{theorem}\label{thm:alm}
Assume that we have \eqref{na} and that there exists a martingale measure $Q\ll P$ of $s$ such that $EV^*(\lambda\frac{dQ}{dP})<\infty$ for two different $\lambda\ge 0$. If $\frac{dQ}{dP}\in\Y$, then the value function of \eqref{alm} is closed in $\U$ and \eqref{alm} has a solution for all $u\in\U$.
\end{theorem}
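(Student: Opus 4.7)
The plan is to recast \eqref{alm} as an instance of \eqref{p} with the proper convex normal integrand
\[
f(x,u,\omega):=V\bigl(u-\textstyle\sum_{t=0}^{T-1}x_t\cdot\Delta s_{t+1}(\omega),\omega\bigr)+\delta_{\{0\}}(x_T),
\]
and then invoke Theorem~\ref{thm:2}. The substitution $w=u-\sum_t x_t\cdot\Delta s_{t+1}$ in the definition of $f^*$ yields
\[
f^*(v,y,\omega)=V^*(y,\omega)+\textstyle\sum_{t=0}^{T-1}\delta_{\{0\}}\bigl(v_t+y\,\Delta s_{t+1}(\omega)\bigr),
\]
which dictates the natural candidates for Lemma~\ref{lem:dualtest}: for $\lambda\ge 0$ put $y^\lambda:=\lambda\,dQ/dP\in\Y$ and let $v^\lambda$ have components $v^\lambda_t:=-\lambda\,(dQ/dP)\,\Delta s_{t+1}$ for $t<T$ and $v^\lambda_T:=0$. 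The martingale property of $Q$ gives $E[x_t\cdot v^\lambda_t]=-\lambda E^Q[x_t\cdot\Delta s_{t+1}]=0$ for every bounded $\F_t$-measurable $x_t$, so $v^\lambda\in\N^\perp$, and by construction $Ef^*(v^\lambda,y^\lambda)=EV^*(\lambda\,dQ/dP)$.

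By convexity of $V^*$ we may assume that the two admissible values of $\lambda$ supplied by the hypothesis are both strictly positive, say $0<\lambda_1<\lambda_2$. Setting $v:=v^{\lambda_1}$ and $\lambda:=\lambda_2/\lambda_1-1>0$ produces $(1+\lambda)v=v^{\lambda_2}$, so the function $\gamma$ of Lemma~\ref{lem:dualtest} is finite at both $v$ and $(1+\lambda)v$. The lemma then supplies the lower bound
\[
f(x,u,\omega)\ge x\cdot v(\omega)+\lambda[x\cdot v(\omega)]^+-g(u,\omega)
\]
with $g(u,\omega)=\max_{i=1,2}u\lambda_i(dQ/dP)(\omega)+\beta(\omega)$ for some $\beta\in L^1$. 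Since $dQ/dP\in\Y$, the $\tau(\U,\Y)$-continuity of $Eg$ and the finiteness of $Eg^\infty$ on $\U$ follow from the bilinear pairing.

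The only remaining nontrivial point is the linearity of $\L=\{x\in\N\mid f^\infty(x,0)\le 0\}$. A direct computation gives $f^\infty(x,0,\omega)=V^\infty(-\sum_t x_t\cdot\Delta s_{t+1})+\delta_{\{0\}}(x_T)$. Because $V$ is convex, nondecreasing, nonconstant, and $V(0)=0$, a short convexity argument shows $\{V^\infty\le 0\}=(-\infty,0]$: otherwise $V^\infty$ vanishes on $[0,\infty)$ by positive homogeneity, and then convexity combined with monotonicity forces $V\equiv 0$. Hence $\L=\{x\in\N_0\mid\sum_t x_t\cdot\Delta s_{t+1}\le 0\text{ a.s.}\}$, and applying \eqref{na} to $-x$ forces $\sum_t x_t\cdot\Delta s_{t+1}=0$, so $\L$ coincides with the linear subspace of strategies with vanishing terminal gain. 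All hypotheses of Theorem~\ref{thm:2} being met, that theorem delivers the closedness of $\varphi$ in $\U$ and the attainment of the infimum for every $u\in\U$.

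The main obstacle I anticipate is the dual-side construction: identifying $v^\lambda\in\N^\perp$ for which $Ef^*(v^\lambda,y^\lambda)$ equals $EV^*(\lambda\,dQ/dP)$ exactly. This is possible precisely because $Q$ is a martingale measure and $dQ/dP\in\Y$, and it is what converts the dual finiteness hypothesis into the primal lower bound required by Theorem~\ref{thm:2}. Everything else reduces either to routine bookkeeping or to the dichotomy for $\{V^\infty\le 0\}$ combined with \eqref{na}.
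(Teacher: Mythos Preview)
Your proof is correct and follows essentially the same route as the paper's: cast \eqref{alm} as an instance of \eqref{p}, compute $f^*$ to see that the choice $y=\lambda\,dQ/dP$, $v_t=-\lambda(dQ/dP)\Delta s_{t+1}$ makes $Ef^*(\lambda v,\lambda y)=EV^*(\lambda\,dQ/dP)$, invoke Lemma~\ref{lem:dualtest}, and reduce the linearity of $\L$ to \eqref{na} via the dichotomy $\{V^\infty\le 0\}=(-\infty,0]$. You are slightly more explicit than the paper in checking $v\in\N^\perp$, in reducing to two strictly positive $\lambda$'s, and in noting why $Eg$ is Mackey-continuous. One small slip: from $\{V^\infty\le 0\}=(-\infty,0]$ and $f^\infty(x,0)=V^\infty(-\sum_t x_t\cdot\Delta s_{t+1})+\delta_{\{0\}}(x_T)$ you get $\L=\{x\in\N_0\mid\sum_t x_t\cdot\Delta s_{t+1}\ge 0\}$, not $\le 0$, so \eqref{na} applies to $x$ itself rather than to $-x$; the conclusion is unaffected.
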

\begin{proof}
We fit \eqref{alm} in the general model \eqref{p} with\footnote{Here $\delta_C$ denotes the {\em indicator function} of a set $C$, i.e.\ $\delta_C(x)=0$ if $x\in C$ and $\delta_C(x)= +\infty$ otherwise.}
\[
f(x,u,\omega)= V\left(u-\sum_{t=0}^{T-1} x_t\cdot\Delta s_t(\omega)\right)+\delta_{0}(x_T).
\]
We have
\[
f^\infty(x,0,\omega) = V^\infty\left(-\sum_{t=0}^{T-1}x_t\cdot\Delta s_{t+1}\right)+\delta_0(x_T).
\]
Since $V$ is nonconstant, we get $V^\infty(u)>0$ for $u>0$ and $V^\infty(u)\le 0$ for $u\le 0$, so the linearity condition in Theorem~\ref{thm:2} reduces to \eqref{na}. In order to apply Lemma~\ref{lem:dualtest}, we calculate
\begin{align*}
f^*(v,y,\omega) &= \sup_{x\in\reals^n,u\in\reals}\left\{ x\cdot v + uy-V\left(u-\sum_{t=0}^{T-1} x_t\cdot\Delta s_{t+1}(\omega)\right)-\delta_0(x_T)\right\}\\
&= V^*(y)+\sup_{x\in\reals^n}\left\{ \sum_{t=0}^{T-1}x_t\cdot (y\Delta s_{t+1}(\omega)+v_t)\right\}\\
&= V^*(y)+\sum_{t=0}^{T-1}\delta_{0}(y\Delta s_{t+1}(\omega)+v_t).
\end{align*}
We choose $y=\frac{dQ}{dP}$ and $v=-\frac{dQ}{dP}\Delta s_{t+1}$ so that
\begin{align*}
Ef^*(\lambda v,\lambda y) &= EV^*\left(\lambda \frac{dQ}{dP}\right).
\end{align*}
Here $Ef^*(\lambda v,\lambda y)$ is finite for two different $\lambda$ by assumption so that, by Lemma~\ref{lem:dualtest}, we may apply Theorem~\ref{thm:2}. 
\end{proof}

\begin{remark}
The proof of Theorem~\ref{thm:alm} also gives a formula for the recession function of the value function associated with \eqref{alm}. That is, since the value function is closed, its recession function is closed as well which, by the recession formula in Theorem~\ref{thm:2}, is equivalent with the fact that the set 
\[
\C = \{u\in\U\,|\, \exists x\in\N_0:\  \sum_{t=0}^{T-1} x_t\cdot\Delta s_{t+1} \ge u\}
\]
of claims that can be superhedged without a cost is closed in $\U$. However, this result follows already from \cite[Theorem 2]{pp12}.
\end{remark}

\begin{remark}\label{rem:2}
Under the assumptions of Theorem~\ref{thm:alm}, if one is merely interested in the existence of solutions of \eqref{alm}, we point out the following. We may define
\[
f(x,u,\omega)= V\left(c(\omega)-\sum_{t=0}^{T-1} x_t\cdot\Delta s_{t+1}(\omega)\right)+\delta_{0}(x_T)
\]
so that $f$ is independent of $u$ and the Fenchel inequality implies that
\[
f(x,u,\omega)\ge x\cdot v(\omega) +c(\omega)\lambda\frac{dQ}{dP}(\omega)-V^*(\lambda\frac{dQ}{dP}(\omega))
\]
for all $(x,u,\omega)\in\R^n\times\R^m\times\Omega$, where $v\in\N^\perp$ is defined by $v_t=-\lambda\frac{dQ}{dP}\Delta s_{t+1}$. Thus, by Theorem~\ref{thm:1},  \eqref{alm} has a solution for $u=c$ whenever $c$ is $Q$-integrable.
\end{remark}

\begin{remark}\label{rem:3}
In general, the price process does not admit a martingale measure for which the integrability condition in Theorem~\ref{thm:alm} is satisfied. For example, consider a one-period market model with a trivial $\sigma$-algebra $\F_0$, $P(\Delta s_1=1)=\frac{3}{4}$, $P(\Delta s_t=-1)=\frac{1}{4}$, and a disutility function specified by 
\begin{align*}
V'(u)=\sum_{n=1}^\infty \left[\one_{(-n-1,-n]}(u)(1+\frac{1}{n^2})+\one_{(n,n+1]}(u)(3-\frac{1}{n^2})\right].
\end{align*}
This is Example 7.3 in \cite{rs5} where it has been shown that the optimal value of \eqref{alm} for $u=0$ is finite but optimal solutions do not exist. The interested reader may verify that the unique martingale measure is given by $\frac{dQ}{dP}=\frac{2}{3}(\one_{\Delta s_1=1}+3\one_{\Delta s_1=-1})$ and that $V^*(y)=+\infty$ whenever $y\notin [1,3]$ so that $EV^*(\lambda\frac{dQ}{dP})=+\infty$ for every $\lambda\neq \frac{3}{2}$. 
\end{remark}

The ``two-$\lambda$-condition'' in the theorem is close in spirit to \cite[Assumption~4.2]{bc11} since it implies a similar $\lambda$-condition for all $\lambda$ sufficiently small \cite[Corollary~4.4]{bc11}. In the cited article the authors work in a continuous time setting, here two different $\lambda$ suffice. 

The following corollary says that the two-$\lambda$-condition is implied by a simpler integrability condition for utility functions that satisfy either of the well-known \emph{asymptotic elasticity} conditions
\begin{align}
V^*(\lambda y) &\le CV^*_t(y,\omega)\quad\forall y\in[0,\bar y]\text{ for some } C<+\infty,\ \bar y>0,\ \lambda\in(0,1),\label{rae1}\\
V^*(\lambda y) &\le CV^*_t(y,\omega)\quad\forall y\ge \bar y\text{ for some } C<+\infty,\ \bar y>0,\ \lambda>1.\label{rae2}
\end{align}
These conditions together with their equivalent formulations were introduced in \cite{ks99} and \cite{sch1}, respectively. 

\begin{corollary}\label{cor:alm}
Assume that we have \eqref{na}, $V$ satisfies \eqref{rae1} or \eqref{rae2}, and that there exists a martingale measure $Q\ll P$ of $s$ such that $EV^*(\lambda \frac{dQ}{dP})<\infty$ for some $\lambda\in\R_+$. If $\frac{dQ}{dP}\in\Y$, then the value function of \eqref{alm} is closed in $\U$ and \eqref{alm} has a solution for all $u\in\U$.
\end{corollary}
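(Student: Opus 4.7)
The plan is to reduce Corollary~\ref{cor:alm} to Theorem~\ref{thm:alm} by promoting the given one-$\lambda$ integrability hypothesis to the two-$\lambda$ condition required there; the asymptotic elasticity condition will supply the promotion. Writing $y_Q := dQ/dP$, I assume $\lambda_0 > 0$ (the case $\lambda_0 = 0$ forces $V$ to be bounded from below and reduces to easier earlier results). The goal is to produce a second value $\lambda_1 \ne \lambda_0$ with $\lambda_1 > 0$ and $EV^*(\lambda_1 y_Q) < \infty$. Markov's inequality applied to $y_Q$, which satisfies $Ey_Q = 1$, will supply that $P\{y_Q > a\} \le 1/a$ for every $a > 0$, so sets where $y_Q$ is large carry small $P$-mass and bounded contributions there integrate finitely.

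In case \eqref{rae1} I would set $\lambda_1 := \lambda\lambda_0$ using the $\lambda \in (0,1)$ and the constants $\bar y, C$ from \eqref{rae1}. On $\{\lambda_0 y_Q \le \bar y\}$ a direct application of \eqref{rae1} at $y = \lambda_0 y_Q$ yields $V^*(\lambda_1 y_Q) \le CV^*(\lambda_0 y_Q)$, which is $P$-integrable by hypothesis. The remaining work is on $\{\lambda_0 y_Q > \bar y\}$, whose $P$-measure is at most $\lambda_0/\bar y$. I would partition it further into $\{\lambda_1 y_Q \le \bar y\}$ and $\{\lambda_1 y_Q > \bar y\}$. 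On the first sub-region $\lambda_1 y_Q \in [\lambda\bar y, \bar y]$, and convexity of $V^*$ together with $V^*(\bar y) < \infty$ (without which \eqref{rae1} would be vacuous) and $V^*(\lambda\bar y) \le CV^*(\bar y) < \infty$ bounds $V^*(\lambda_1 y_Q)$ by a constant. On the second, $\lambda_1 y_Q \in (\bar y, \lambda_0 y_Q)$ and convexity gives $V^*(\lambda_1 y_Q) \le V^*(\bar y) + V^*(\lambda_0 y_Q)$. Assembling the three pieces yields $EV^*(\lambda_1 y_Q) < \infty$.

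Case \eqref{rae2} is entirely symmetric with $\lambda_1 := \lambda\lambda_0 > \lambda_0$: condition \eqref{rae2} applied at $y = \lambda_0 y_Q$ handles the set $\{\lambda_0 y_Q \ge \bar y\}$ directly, while on $\{\lambda_0 y_Q < \bar y\}$ (where $y_Q$ is bounded by $\bar y/\lambda_0$ and hence $\lambda_1 y_Q < \lambda\bar y$) I would again split at $\lambda_1 y_Q = \bar y$ and use convexity of $V^*$ anchored at $\bar y$ and $\lambda\bar y$ in the same manner. Having produced two distinct positive values $\lambda_0, \lambda_1$ for which $EV^*(\lambda y_Q)$ is finite, Theorem~\ref{thm:alm} applies and concludes the proof. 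The main difficulty I anticipate is purely the case-by-case bookkeeping on the set where the asymptotic elasticity inequality is silent; convexity of $V^*$ together with finiteness at the single reference point $\bar y$ dispatches this uniformly.
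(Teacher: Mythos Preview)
Your proposal is correct and follows essentially the same route as the paper: reduce to Theorem~\ref{thm:alm} by manufacturing a second $\lambda$ from the asymptotic elasticity condition, splitting $\Omega$ according to the size of $y_Q$ and using convexity of $V^*$ where \eqref{rae1} or \eqref{rae2} is silent. The paper's version is marginally cleaner---it normalizes to $\lambda_0=1$ and needs only two regions, bounding $V^*(\lambda' y_Q)$ on $\{y_Q>\bar y\}$ directly by $\max\{V^*(y_Q),V^*(\lambda'\bar y)\}$ via convexity---so your three-region split and the Markov-inequality remark are unnecessary, but the argument is the same in substance.
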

\begin{proof}
We use the notation from the proof of Theorem~\ref{thm:alm}. For simplicity, we assume that $\lambda=1$. If we have \eqref{rae1}, then
\begin{align*}
EV^*(\lambda' \frac{dQ}{dP}) &= E\one_{\{\frac{dQ}{dP}\le \bar y\}}V^*(\lambda' \frac{dQ}{dP}) + E\one_{\{\frac{dQ}{dP}> \bar y\}}V^*(\lambda' \frac{dQ}{dP})\\
&\le E\one_{\{\frac{dQ}{dP}\le \bar y\}}V^*(\lambda' \frac{dQ}{dP}) + E\one_{\{\frac{dQ}{dP}> \bar y\}}\max\{V^*(\frac{dQ}{dP}), V^*(\lambda' \bar y)\}\\
&\le E\one_{\{\frac{dQ}{dP}\le \bar y\}}CV^*(\frac{dQ}{dP}) + E\one_{\{\frac{dQ}{dP}> \bar y\}}\max\{V^*(\frac{dQ}{dP}),CV^*(\bar y)\},
\end{align*}
so $Ef^*(\lambda' v,\lambda' y)$ is finite for some $\lambda'<1$. Similary, if we have \eqref{rae2}, then $Ef^*(\lambda' v,\lambda' y)$ is finite for some $\lambda'>1$.
\end{proof}

The next theorem gives conditions for the existence of a martingale measure used in Corollary~\ref{cor:alm}.  The third condition in the theorem means that the optimal value of \eqref{alm} is finite for some positive initial endowment. We say that the price process is bounded from below if there is a constant $a\le 0$ such that $s^j_t\ge a$ $P$-almost surely for all $t$ and $j\in J$. In the proof, much like in that of \cite[Proposition 3.2]{ks99}, we approximate the disutility function by disutility functions that are bounded from below.
\begin{theorem}\label{thm:lb}
Assume that we have \eqref{na}, $V$ satisfies \eqref{rae1}, the optimal value of \eqref{alm} is finite for some $u=c$, where $c<0$ is a constant, the price process is bounded from below, and that $s_0\in L^\infty$. Then there exists a martingale measure $Q$ of $s$ with $EV^*(\lambda \frac{dQ}{dP})<\infty$ for some $\lambda\in\R_+$.
\end{theorem}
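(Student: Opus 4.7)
The plan is to approximate $V$ from above by truncations bounded below, apply Theorem~\ref{thm:pp12} to each approximation to obtain a candidate dual martingale measure, and then pass to the limit along a Koml\'os subsequence, using the asymptotic elasticity hypothesis to rule out degeneration. Let $V^n(u):=V(u)\vee(-n)$; this is convex nondecreasing with $V^n(0)=0$ and bounded below by the integrable constant $-n$. For the integrand
\[
f^n(x,u,\omega)=V^n\Bigl(u-\sum_{t=0}^{T-1}x_t\cdot\Delta s_{t+1}\Bigr)+\delta_0(x_T),
\]
the recession-linearity condition reduces to \eqref{na} exactly as in the proof of Theorem~\ref{thm:alm}. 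Theorem~\ref{thm:pp12} then yields an optimal $x^n\in\N_0$ with $\varphi^n(c):=EV^n(c-\sum x^n_t\cdot\Delta s_{t+1})$ satisfying $\varphi(c)\le\varphi^n(c)\le 0$.

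From the optimality of $x^n$ I extract, by standard convex subdifferential calculus (available because $V^n$ is bounded below and hence $\varphi^n$ is well-behaved around the bounded perturbation $c$), a nonnegative $y^n\in L^1$ satisfying the pointwise Fenchel identity $V^n(\xi^n)+V^{n,*}(y^n)=y^n\xi^n$ at $\xi^n:=c-\sum x^n_t\cdot\Delta s_{t+1}$, together with the orthogonality $E[y^n\sum x_t\cdot\Delta s_{t+1}]=0$ for admissible $x$. The conditions $s_0\in L^\infty$ and $s\ge a$ supply the integrability needed here and, via Lemma~\ref{lem:1}, let the orthogonality extend to $x=x^n$; consequently $\lambda^n:=Ey^n$ is finite and $dQ^n/dP:=y^n/\lambda^n$ is a martingale measure of $s$ whenever $\lambda^n>0$. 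Integrating the Fenchel identity gives the uniform bounds
\[
EV^{n,*}(y^n)=c\lambda^n-\varphi^n(c)\le -\varphi(c),\qquad \lambda^n\le -\varphi(c)/|c|,
\]
the second using $V^{n,*}\ge 0$.

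The $L^1$-boundedness of $(y^n)$ lets Koml\'os's lemma produce convex combinations $\tilde y^n\in\mathrm{conv}\{y^n,y^{n+1},\dots\}$ converging $P$-a.s.\ to some $y^\infty\ge 0$. The orthogonality is preserved by convex combinations and, thanks to the lower bound on $s$ and $s_0\in L^\infty$, also by the $P$-a.s.\ limit (a conditional Fatou argument). Moreover, $V^n\downarrow V$ implies $V^{n,*}\uparrow V^*$ pointwise, so Fatou applied at each fixed index $k$ (using $V^{k,*}\le V^{m,*}$ for $k\le m$) followed by monotone convergence in $k$ upgrades the first uniform bound above to $EV^*(y^\infty)\le -\varphi(c)<\infty$.

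The main obstacle, and where \eqref{rae1} is genuinely used, is ensuring $\lambda^\infty:=Ey^\infty>0$; otherwise $y^\infty\equiv 0$ and no martingale measure emerges. Following the strategy of \cite[Proposition~3.2]{ks99}, the asymptotic elasticity \eqref{rae1} controls the growth of $V^*$ near the origin, which combined with the finiteness of $\varphi(c)$ at a strict loss $c<0$ precludes $\lambda^n\to 0$: were this to happen, $y^n$ would concentrate near $0$ and the blow-up of $V^{n,*}$ on this region (since $V^{n,*}(0)=n$) forced by the controlled singularity of $V^*$ would violate the uniform bound $EV^{n,*}(y^n)\le -\varphi(c)$. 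With $\lambda^\infty>0$ secured, $Q:=(y^\infty/\lambda^\infty)\cdot P$ is the required martingale measure and $\lambda:=\lambda^\infty$ witnesses $EV^*(\lambda\,dQ/dP)<\infty$.
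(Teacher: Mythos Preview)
Your overall architecture---truncate $V$, extract approximate dual elements $y^n$, pass to a limit---matches the paper's, but two of the three key steps do not go through as written.

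\textbf{The martingale property does not survive a Koml\'os limit.} You obtain $\tilde y^n\to y^\infty$ $P$-a.s.\ with $(\tilde y^n)$ bounded in $L^1$ and each $\tilde y^n$ a positive multiple of a martingale-measure density, and then assert that ``the orthogonality is preserved \dots\ by the $P$-a.s.\ limit (a conditional Fatou argument)''. Fatou's lemma yields only inequalities: from $\tilde y^n\ge 0$ and $s\ge a$ you can bound $E[y^\infty\one_A(s_{t+1}-a)]$ and $E[y^\infty\one_A(s_t-a)]$ from above by the common $\liminf$, but this does not give the \emph{equality} $E[y^\infty\one_A s_{t+1}]=E[y^\infty\one_A s_t]$. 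Without uniform integrability of $\tilde y^n s_t$ (which you do not have, since $s_t$ need not be bounded above) the martingale property can genuinely fail in the limit. The paper avoids this obstacle by \emph{not} passing to the Koml\'os limit: after obtaining the convex combinations $\tilde y^{n(\nu)}$, it defines the candidate density as the countable convex series $\tilde y=\sum_\nu 2^{-\nu}\tilde y^{n(\nu)}$, so that the partial sums increase to $\tilde y$ and the martingale identity follows from the \emph{monotone} convergence theorem (using $s\ge a$ for the one direction and $s_0\in L^\infty$ together with the martingale property of each term to bound $E[\tilde y\,s_t]=E[\tilde y\,s_0]<\infty$). The paper also introduces a weight $\beta=\max_t\{1,|s_t|\}$ at the outset so that the relevant conjugates can be computed via the interchange rule; your argument omits this device.

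\textbf{The non-degeneration step is not a proof.} Your argument for $\lambda^\infty>0$ reads: if $\lambda^n\to 0$ then ``$y^n$ would concentrate near $0$ and the blow-up of $V^{n,*}$ on this region (since $V^{n,*}(0)=n$) \dots\ would violate $EV^{n,*}(y^n)\le -\varphi(c)$''. But $\lambda^n=Ey^n\to 0$ does not force $y^n\to 0$ almost surely, and even if it did, $V^{n,*}$ is a moving target in $n$, so there is no contradiction without a quantitative link between the smallness of $y^n$ and the size of $V^{n,*}(y^n)$. You do not actually use \eqref{rae1} here beyond an allusion. In the paper, \eqref{rae1} plays a completely different role: having already obtained a Koml\'os limit $\bar y$ with $EV^*(\bar y/\beta)<\infty$, one uses \eqref{rae1} to deduce that $EV^*(2^{-\nu}\bar y/\beta)<\infty$ for every $\nu$, and then runs a diagonalisation based on the bounded increments $W_\nu=V^*_{n(\nu+1)}-V^*_{n(\nu)}$ to show that the series $\tilde y=\sum_\nu 2^{-\nu}\tilde y^{n(\nu)}$ satisfies $EV^*(\tilde y/\beta)<\infty$. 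No separate ``$\lambda^\infty>0$'' argument is needed because $\tilde y$ is built from genuine martingale-measure densities rather than being a pointwise limit of them.

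In short, the proposal identifies the right ingredients (truncation, duality, Koml\'os, Fatou) but the passage to the limit fails at exactly the two points where the paper invests real work: preserving the martingale structure and controlling $EV^*$ along the construction.
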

\begin{proof}
For positive integers $n$, we define functions $\tilde\varphi_n$ on $\U=L^\infty(\F_T)$ by
\[
\tilde\varphi_n(u)=\inf_{x\in\N^\infty}E V_n\left(\beta u-\sum_{t=0}^{T-1}x_t\cdot\Delta s_{t+1}\right),
\]
where $V_n=\max\{V,-n\}$ and $\beta=\max_t\{1,|s_t|\}$. For $\Y=L^1(\F_T)$, we get 
\begin{align*}
\tilde\varphi_n^*(y) &= \sup_{x\in\N^\infty}\sup_{u\in\U}E\left[uy-V_n\left(\beta u-\sum_{t=0}^{T-1}x_t\cdot\Delta s_{t+1}\right)\right]\\
&= \sup_{x\in\N^\infty}E\left[y/\beta\sum_{t=0}^{T-1}x_t\cdot\Delta s_{t+1}\right]+EV_n^*(y/\beta)\\
&=\delta_{\mathcal Q}(y/\beta)+EV_n^*(y/\beta),
\end{align*}
where $\mathcal Q$ is the set of positive multiples of martingale measure densities of $s$. Here the second equality follows from the interchange rule \cite[Theorem~14.60]{rw98}, since, for every $x\in\N^\infty$, there exists $u\in \U$ such that $EV_n(\beta u-\sum_{t=0}^{T-1}x_t\cdot\Delta s_{t+1})<\infty$. 

We have that $\varphi\le\varphi_n\le\tilde\varphi_n$, where 
\begin{align*}
\varphi(u) &=\inf_{x\in\N}E V\left(\beta u-\sum_{t=0}^{T-1}x_t\cdot\Delta s_{t+1}\right),\\
\varphi_n(u) &=\inf_{x\in\N}E V_n\left(\beta u-\sum_{t=0}^{T-1}x_t\cdot\Delta s_{t+1}\right).
\end{align*}
We get as in the proof of Theorem~\ref{thm:alm} that $\varphi_n$ are closed.\footnote{In fact, we may apply \cite[Theorem 2]{pp12}.} Therefore $\varphi(c/\beta)\le\varphi_n(c/\beta)\le\tilde\varphi_n^{**}(c/\beta)$ and, by the above expression for $\tilde\varphi^*_n$, there exist $y^n/\beta\in\Q$ such that 
\begin{align*}
\lim_n E[cy^n/\beta-V_n^*(y^n/\beta)+n^{-1}]\ge\lim_n \varphi_n(c/\beta)\ge \varphi(c/\beta)>-\infty.
\end{align*}
Since $V_n^*$ are nonnegative and $c$ is negative, $y^n/\beta$ are bounded in $L^1$. Thus Koml\'os' theorem (see e.g. \cite{ks9}) implies the existence of a sequence of convex combinations $\tilde y^n/\beta\in \co\{y^{n'}/\beta \mid n'\ge n\}$ such that $\tilde y^n/\beta$ converge to some $\bar y/\beta\in L^1$ $P$-almost surely. By Fatou's lemma and convexity,
\begin{align}\label{eq:it0}
E[c\bar y/\beta-V^*(\bar y/\beta)]\ge\limsup_n E[c\tilde y^n/\beta-V_n^*(\tilde y^n/\beta)]\ge\varphi(c/\beta)>-\infty.
\end{align}

Since $V$ satisfies \eqref{rae1}, we get, as in the proof of Corollary~\ref{cor:alm}, that $EV^*(2^{-\nu}\bar y/\beta)$ is finite. Since $V_n^*$ increase to $V^*$, there is, for every $\nu$, an $n(\nu)$ such that
\begin{align}\label{eq:it}
EV_{n}^*(2^{-\nu}\bar y/\beta)\ge EV^*(2^{-\nu}\bar y/\beta)-2^{-\nu}
\end{align}
for all $n\ge n(\nu)$. Each of the functions $W_\nu:=V_{n(\nu+1)}^*-V_{n(\nu)}^*$ is bounded, so, by a diagonalization argument, we may assume that 
\begin{align*}
EW_\nu(2^{-\nu}\tilde y^n/\beta)\le EW_\nu(2^{-\nu}\bar y/\beta)+2^{-\nu}
\end{align*}
for all $n\ge n(\nu)$. Since $W_\nu\le V^*-V^*_{n(\nu)}$ and since we have \eqref{eq:it}, we get that
\begin{align}\label{eq:it2}
EW_\nu(2^{-\nu}\tilde y^n/\beta)\le 2^{-\nu+1}
\end{align}
for all $n\ge n(\nu)$. We define 
\[
\tilde y =\sum_{\nu=0}^\infty 2^{-\nu}\tilde y^{n(\nu)}.
\]
Firstly, since $V^*_n$ increase to $V^*$, we see from \eqref{eq:it0} that $V^*_{n(0)}(\tilde y^{n(\nu)}/\beta)$ are bounded in $L^1$ and thus $EV_{n(0)}^*(\tilde y/\beta)\le \sum_{\nu=0}^\infty 2^{-\nu} EV^*_{n(0)}(\tilde y^{n(\nu)}/\beta)<\infty$, where the first inequality follows from the lower semicontinuity and Fatou's lemma. Secondly, it is not difficult to verify that $W_\nu$ are decreasing functions so that, by \eqref{eq:it2}, $EW_\nu(\tilde y/\beta) \le 2^{-\nu+1}$. 
Thus
\[
EV^*(\tilde y/\beta)=E V^*_{n(0)}(\tilde y/\beta)+\sum_{\nu=0}^\infty EW_\nu(\tilde y/\beta)<\infty.
\]
Since the price process is bounded from below, we get from the monotone convergence theorem that
\[
E\left[\frac{\tilde y}{\beta} \one_A s_{t+1}\right]=\sum_{\nu=1}^\infty 2^{-\nu}E\left[\frac{\tilde y^\nu}{\beta} \one_A s_{t+1}\right]=\sum_{\nu=1}^\infty 2^{-\nu}E\left[\frac{\tilde y^\nu}{\beta} \one_A s_{t}\right] = E\left[\frac{\tilde y}{\beta} \one_A s_t\right]
\]
 for every $t$ and $A\in\F_t$. Finally, by monotone convergence again, $E[\tilde y/\beta s_t]=E[\tilde y/\beta s_0]<\infty$, so $\tilde y/\beta$ is a positive multiple of a martingale measure density.


\end{proof}

Under the mild assumptions that the price process is bounded from below and that $s_0\in L^\infty$, we recover the following result on the existence of solutions by R\'asonyi and Stettner in \cite[Theorem 2.7]{rs5}. 
\begin{corollary}
Assume that we have \eqref{na}, there exists $\tilde u<0$ and $0<\gamma<1$ such that
\[
V(\lambda u)\ge \lambda^\gamma V(u)\quad\forall u\le \tilde u,\ \lambda\ge 1,
\]
the functions $V_T(u,\omega)=V(u)$ and
\begin{align*}
V_t(u) &=\essinf_{x_t\in L^0(\F_t)} E[V_{t+1}(u+x_t\cdot\Delta s_{t+1})\mid \F_t]
\end{align*}
on $\R\times\Omega$ are well-defined and proper, and that $EV_0(u)>-\infty$ for all constants $u$. If the price process is bounded from below and $s_0\in L^\infty$, then \eqref{alm} has a solution for every $u\in L^\infty$.
\end{corollary}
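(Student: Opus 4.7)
The plan is to verify the hypotheses of Corollary~\ref{cor:alm} by constructing a suitable martingale measure via Theorem~\ref{thm:lb}, working with $\U=L^\infty(\F_T)$ and $\Y=L^1(\F_T)$.

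First I would show that the pointwise bound $V(\lambda u)\ge\lambda^\gamma V(u)$ for $u\le\tilde u$ and $\lambda\ge 1$ implies the dual condition \eqref{rae1} on $V^*$. Setting $U(w)=-V(-w)$ converts the hypothesis into the reasonable asymptotic elasticity condition $U(\lambda w)\le\lambda^\gamma U(w)$ for $w\ge-\tilde u$ and $\lambda\ge 1$. Since $V^*$ restricted to $[0,\infty)$ coincides with the conjugate of $-U$, a standard Legendre--Fenchel computation (see \cite{ks99}, \cite{sch1}) then yields constants $C<\infty$ and $\bar y>0$ such that $V^*(\lambda y)\le CV^*(y)$ for $y\in(0,\bar y]$ and $\lambda\in(0,1)$.

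Next I would verify that the optimal value of \eqref{alm} is finite for some negative constant. Fix any constant $c<0$. Plugging $x=0$ into \eqref{alm} yields the upper bound $V(c)\le V(0)=0$. For the lower bound, the substitution $y_t=-x_t$ leaves $\N_0$ invariant and recasts \eqref{alm} as $\inf_{y\in\N_0}EV(c+\sum_{t=0}^{T-1}y_t\cdot\Delta s_{t+1})$; iterating the defining identity
\[
V_t(\cdot)=\essinf_{y_t\in L^0(\F_t)}E[V_{t+1}(\cdot+y_t\cdot\Delta s_{t+1})\mid\F_t]
\]
against the tower property gives $EV(c+\sum_{t=0}^{T-1}y_t\cdot\Delta s_{t+1})\ge EV_0(c)$ for every admissible $y$, and $EV_0(c)>-\infty$ by hypothesis. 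So the value lies in $(-\infty,0]$.

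With \eqref{na}, the lower boundedness of $s$, $s_0\in L^\infty$, \eqref{rae1}, and finiteness of the optimal value at $u=c<0$ all in hand, Theorem~\ref{thm:lb} produces a martingale measure $Q\ll P$ of $s$ with $EV^*(\lambda\frac{dQ}{dP})<\infty$ for some $\lambda\in\R_+$. Since $\frac{dQ}{dP}\in L^1=\Y$ and \eqref{rae1} has been established, Corollary~\ref{cor:alm} applies and delivers a solution of \eqref{alm} for every $u\in L^\infty=\U$. The main obstacle is the first step: the transfer of the $\gamma$-growth condition on $V$ at $-\infty$ to the growth bound \eqref{rae1} on $V^*$ near $0$ is classical but requires care with sign conventions and with the possibility that $V^*(0)=+\infty$; the remaining work is essentially bookkeeping linking Theorem~\ref{thm:lb} to Corollary~\ref{cor:alm}.
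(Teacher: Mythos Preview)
Your proposal is correct and follows essentially the same route as the paper: translate the $\gamma$-growth condition on $V$ into \eqref{rae1} via \cite{ks99}, use the dynamic-programming recursion together with the tower property to show that the optimal value at a negative constant $c$ is bounded below by $EV_0(c)>-\infty$, then feed this into Theorem~\ref{thm:lb} and finish with Corollary~\ref{cor:alm}. The only place where the paper is more explicit is in justifying the inequality $V_t(u,\omega)\le E[V_{t+1}(u+x_t\cdot\Delta s_{t+1})\mid\F_t](\omega)$ as holding \emph{outside an evanescent set} (so that one may plug in the random argument $u=c+\sum_{t'<t}x_{t'}\cdot\Delta s_{t'+1}$), which it does by verifying that both sides are normal $\F_t$-integrands; you should flag that this measurable-selection/normal-integrand step is where the ``bookkeeping'' actually lives.
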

\begin{proof}
By \cite[Corollary 6.1]{ks99}, \eqref{rae1} and the given growth condition for $V$ are equivalent. Thus, by Theorem~\ref{thm:lb},  Corollary~\ref{cor:alm} and Remark~\ref{rem:2}, it suffices to show that \eqref{alm} is finite for $u=c$, where $c<0$ is a constant.

Let $x\in\N$ be such that $EV(c-\sum_{t=0}^{T-1}x_t\cdot\Delta s_{t+1})<\infty$. The proofs of Proposition 4.2 and Proposition 4.4 in \cite{rs5} show that, outside an evanescent set,\footnote{A set in $\R\times\Omega$ is evanescent if its projection onto $\Omega$ is a null-set.} each  
\[
\tilde V_t(u,\omega)= E[V_{t+1}(u+x_t\cdot\Delta s_{t+1})\mid \F_t](\omega)
\]
and each $V_t(u,\omega)$ is convex, nondecreasing and lower semicontinuous in the $u$-argument and $\F_t$-measurable in the $\omega$-argument. Thus these functions are normal $\F_t$-integrands by \cite[Proposition 14.39]{rw98}, and it is therefore not difficult to verify that $V_t\le\tilde V_t$ outside an evanescent set. Consequently,
\[
V_t\left(c+\sum_{t'=0}^{t-1}x_{t'}\Delta s_{t'+1}\right)\le E\left[V_{t+1}\left(c+\sum_{t'=0}^{t}x_{t'}\Delta s_{t'+1}\right)\midb \F_t\right]\ P\text{-a.s.},
\]
where the sum on the left side is defined as zero for $t=0$. A repetition of these arguments for every $t$ gives
\[
EV_0(c)\le EV\left(c+\sum_{t=0}^{T-1}x_{t}\Delta s_{t+1}\right),
\]
from where we get the claim by taking the infimum over $x\in\N$.
 
\end{proof}

We finish by pointing out that our main results are applicable in much more general settings of optimal investment in illiquid markets as well; see \cite{pp12} for such example with utilities that are bounded from above. Extensions to other illiquid models together with unbounded utilities will be analyzed elsewhere.

\section*{Acknowledgments}
The author is grateful to the Einstein Foundation for the financial support. The author thanks Teemu Pennanen and Mikl\'os R\'asonyi for helpful comments and discussions.
\bibliographystyle{plain}
\bibliography{sp}

\end{document}